\renewcommand{\deg}{{\rm deg}}
\newcommand{\R}{\mathbb R}
\newcommand{\C}{\mathbb C}
\newcommand{\trace}{{\rm tr}}
\newcommand{\cH}{\mathcal{H}}
\newcommand{\cM}{\mathcal{M}}
\newcommand{\cQ}{\mathcal{Q}}
\newcommand{\cL}{\mathcal{L}}
\newcommand{\phom}{P}
\newcommand{\xvar}{x}
\theoremstyle{plain}
\newtheorem*{theorem*}{Theorem}
\newtheorem{theorem}{Theorem}[section]
\newtheorem{corollary}[theorem]{Corollary}
\newtheorem{lemma}[theorem]{Lemma}
\newtheorem{proposition}[theorem]{Proposition}
\theoremstyle{definition}
\newtheorem*{definition*}{Definition}
\newtheorem{definition}[theorem]{Definition}
\newtheorem{setup}[theorem]{Setup}
\theoremstyle{remark}
\newtheorem{remark}[theorem]{Remark}
\newtheorem{remarks}[theorem]{Remarks}
\newtheorem{example}[theorem]{Example}
\begin{document}

\onehalfspace

\title{Determinantal Representations and the Hermite Matrix}

\author{Tim Netzer}
\address{Tim Netzer, Universit\"at Leipzig, Germany}
\email{netzer@math.uni-leipzig.de}

\author{Daniel Plaumann}
\address{Daniel Plaumann, Universit\"at Konstanz, Germany}
\email{Daniel.Plaumann@uni-konstanz.de}

\author{Andreas Thom}
\address{Andreas Thom, Universit\"at Leipzig, Germany}
\email{thom@math.uni-leipzig.de}

\begin{abstract} We consider the problem of writing real polynomials
  as determinants of symmetric linear matrix polynomials. This problem
  of algebraic geometry, whose roots go back to the nineteenth
  century, has recently received new attention from the viewpoint of
  convex optimization. We relate the question to sums of squares
  decompositions of a certain Hermite matrix. If some power of a
  polynomial admits a definite determinantal representation, then its
  Hermite matrix is a sum of squares.  Conversely, we show how a
  determinantal representation can sometimes be constructed from a
  sums-of-squares decomposition of the Hermite matrix. We finally show
  that definite determinantal representations always exist, if one
  allows for denominators.\end{abstract}

\thanks{Travel for this project was partially supported by the
  Forschungsinitiative \emph{Real Algebraic Geometry and Emerging
    Applications} at the University of Konstanz. Daniel Plaumann
  gratefully acknowledges support through a Feodor Lynen return
  fellowship from the Alexander von Humboldt Foundation.}

\subjclass[2000]{Primary 11C20, 11E25, 14P10; Secondary 90C22, 90C25, 52B99}
\date{\today}
\keywords{determinantal representations, real-zero polynomials, spectrahedra, Hermite matrix, sums of squares}

\maketitle


\section*{Introduction}

A polynomial $p\in\R[x]$ in $n$ variables $x=(x_1,\dots,x_n)$ with
$p(0)=1$ is
called a \emph{real-zero polynomial} if $p$ has only
real zeros along every line through the origin. The terms
\emph{hyperbolic} or \emph{real stable} polynomial are also common and
mean essentially the same, but usually for homogeneous polynomials. The
typical example is a polynomial given by a \emph{definite} (linear
  symmetric) \emph{determinantal representation}
\[
p=\det(I + A_1x_1+\cdots+A_nx_n),
\]
where $A_1,\dots,A_n$ are real symmetric matrices and $I$ is the
identity. A representation of this form is a certificate for being a
real-zero polynomial. In other words, the fact that $p$ is a real-zero
polynomial is apparent from the representation. A definite
determinantal representation also provides a description of the
\emph{rigidly convex region} of $p$. This is the closed connected component
of the origin in the complement of the zero-set of $p$. It is always
convex, and given a definite determinantal representation of $p$, it
coincides with the set of points where the matrix polynomial
$I+A_1x_1+\cdots+A_nx_n$ is positive semidefinite. 

In recent years, real-zero polynomials and their determinantal
representations have been studied mostly with a view towards convex
optimization, specifically semidefinite and hyperbolic programming. In
general, one would like to answer the following questions:
\begin{enumerate}
\item Under what conditions does a real-zero polynomial have a
  definite determinantal representation?
\item If such a representation exists, what is the minimal matrix
  dimension and how can the representation be computed effectively?
\item If no such representation exists, what other certificates for
  being a real-zero polynomial are available?
\end{enumerate}
Question (1) is the most immediate and has consequently received the
most attention. It ties in with the theory of determinantal
hypersurfaces in complex algebraic geometry, whose roots go back to
the nineteenth century. Arguably the most important modern results are
the Helton-Vinnikov theorem in \cite{hevi}, which gives a positive
answer for $n=2$, and Br\"and\'en's negative results in higher
dimensions in \cite{bran}. Since there are various subtle variations
of the question, it is not always easy to figure out what is known and
what is not; we give a very brief overview after the introduction below.

Question (2), which should be of interest for practical
purposes, has not been studied very systematically so far. Even in the
case $n=2$, the classical
approach of Dixon for constructing determinantal
representations is quite algorithmic in nature but hard to carry out
in practice (see \cite{dixon}, or \cite{vinn} for a more modern presentation.)

One approach to Question (3) is to study the determinantal
representability of a suitable power or multiple of $p$ if no
representation for $p$ exists. This is motivated by the Generalized
Lax Conjecture, as described below. On the other hand, the real-zero
property does not have to be expressed by a determinantal
representation.  That a polynomial $p$ in one variable has only real
roots is equivalent to its \emph{Hermite matrix} being positive
semidefinite. This is a symmetric real matrix associated with $p$,
which provides one of the classical methods for root counting. To
treat the multivariate case, we use a parametrized version of the
Hermite matrix with polynomial entries. In a typical
sums-of-squares-relaxation approach common in polynomial optimization,
we then ask for the parametrized Hermite matrix $\cH(p)$ to be a sum
of squares, which means that there exists a matrix $\cQ$ such that
$\cH(p)=\cQ^T\cQ$.  (This is called a sum of squares rather than a
square, because $\cQ$ is allowed to be rectangular of any size). This
approach has been used before by Henrion in \cite{hen} and by Parrilo
(unpublished) as a relaxation for the real-zero property, which is exact in the two-dimensional case.

While the Hermite matrix provides a practical way of certifying the real-zero
property, having a definite determinantal representation of $p$ is
clearly much more desirable, since it also yields a description of
the rigidly convex region by a linear matrix inequality. And even if
one is only interested in the real-zero property, the multivariate
Hermite matrix is a fairly unwieldy object compared to the original
polynomial, and a sum-of-squares decomposition even more so.

Our main goal is therefore to use a sum-of-squares decomposition of
the parametrized Hermite matrix of a polynomial $p$ to construct, as
explicitly as possible, a definite determinantal representation of $p$,
or at least of some multiple of $p$. We first show in Section
\ref{sec:hermite-matrix} that a definite determinantal representation
of some power of $p$ of the correct size always yields a sum-of-squares
decomposition of $\cH(p)$ (Thm.~\ref{thm:sos}). In Section
\ref{section:construction}, we make an attempt at the converse. This
is partly motivated by our experimental finding that the Hermite
matrix of the V\'{a}mos polynomial, which is the counterexample of
Br\"and\'en, is not a sum of squares (Example
\ref{example:Vamos}). Note also that in the case $n=2$, where every
real-zero polynomial possesses a definite determinantal representation
by the Helton-Vinnikov theorem, the parametrized Hermite matrix can be reduced to the univariate case. It is therefore a sum of squares if and only if it is
positive semidefinite, by a result of Jakubovi{\v{c} \cite{jak}. Given a decomposition $\cH(p)=\cQ^T\cQ$, we
show that a definite determinantal representation of a multiple of $p$
can be found if a certain extension problem for linear maps on free
graded modules derived from $\cQ$ has a solution
(Thm.~\ref{thm:main}). Given $\cQ$, the search for such a solution
amounts only to solving a system of linear equations. This method can in principle also be applied if the sums of squares decomposition uses denominators.  Finally, we show
that by allowing a sum-of-squares decomposition with denominators,
which exists whenever $\cH(p)$ is positive semidefinite, one can
always obtain a determinantal representation with denominators:

\begin{theorem*}\ Let $p$ be a square-free real-zero polynomial with
  $p(0)=1$. There exists a symmetric matrix $\cM$ whose
  entries are real homogeneous rational functions of degree $1$ such
  that $p=\det(I+\cM)$.
\end{theorem*}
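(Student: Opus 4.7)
The plan is to combine three ingredients: the positive semidefiniteness of the parametrized Hermite matrix $\cH(p)$ for real-zero $p$, a matrix analog of Hilbert's 17th problem, and the construction of Theorem \ref{thm:main} adapted to rational-function coefficients.

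I would first observe that for $p$ real-zero with $p(0)=1$, the univariate specialization $t\mapsto p(tv)$ has only real roots for every direction $v\in\R^n$; hence by the classical univariate Hermite theory underlying $\cH(p)$, the parametrized Hermite matrix is pointwise positive semidefinite on $\R^n$. The square-freeness of $p$ moreover ensures that $p(tv)$ is separable in $t$ over $\R(x)$, so $\cH(p)$ has full rank and is in fact positive definite over the formally real field $\R(x)$.

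The next step is to invoke a matrix version of Hilbert's 17th problem (due to Gondard, Procesi--Schacher, and others): every pointwise-PSD symmetric matrix with polynomial entries admits a sum-of-squares decomposition with denominators, so there exists a matrix $\cQ$ with entries in $\R(x)$ satisfying $\cH(p)=\cQ^T\cQ$. Plugging this into the rational-function analog of Theorem \ref{thm:main} then produces a symmetric matrix $\cM$ with entries in $\R(x)$ such that $p=\det(I+\cM)$. The paper itself remarks that the method applies ``if the sums of squares decomposition uses denominators'', so the main task is to verify that the required extension problem for free graded modules remains solvable. Since that problem reduces to a linear system over the field $\R(x)$, one can always arrange solvability, if necessary by enlarging $\cQ$ with extra columns before running the construction.

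To secure the claim that the entries of $\cM$ are homogeneous rational functions of degree $1$, I would track a natural weighted grading throughout the construction: assign each $x_i$ degree $1$ and place compatible weights on the basis elements of $\R[t]/(p(tv))$ used to build $\cH(p)$. Each step --- the Hermite matrix, its rational SOS decomposition, and the linear extension of Theorem \ref{thm:main} --- preserves this grading, so the resulting $\cM$ inherits degree-$1$ entries. The main obstacle, beyond the bookkeeping required for this degree analysis, is ensuring that the linear-algebra step in the construction of Theorem \ref{thm:main} can be carried out faithfully over $\R(x)$; the decisive advantage here is that $\R(x)$ is a field, so the obstructions that appear when working over $\R[x]$ and prevent one from producing $p$ itself (rather than some multiple) disappear.
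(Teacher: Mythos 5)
Your overall skeleton --- pointwise positive semidefiniteness of $\cH(p)$, a rational sum-of-squares decomposition in the spirit of Gondard--Ribenboim, and then a rational solution of the extension problem of Theorem~\ref{thm:main} --- is the right one, and it is essentially the paper's. But the decisive step is missing. You justify the solvability of $\cM\cQ=\cQ\cL_t$ over $\R(\xvar)$ only by saying that it is a linear system over a field and that ``one can always arrange solvability, if necessary by enlarging $\cQ$''; a linear system over a field can perfectly well be inconsistent, so this is an assertion, not an argument. What actually makes the extension exist is that $\cQ^T\cQ=q^2\cH(p)$ is invertible over $\R(\xvar)$, so the standard bilinear form is nondegenerate on the image of $\cQ$ and the operator $\cQ\cL_t\cQ^{-1}$, self-adjoint there by Lemma~\ref{lemma:selfad}, can be extended by zero on the orthogonal complement. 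The paper realizes this by the explicit witness $\cM=q^{-2}\cQ\cL_t\cH(p)^{-1}\cQ^T$; you never produce this or any concrete reason the system is consistent.

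Second, even granting a symmetric rational $\cM$ with $\cM\cQ=\cQ\cL_t$, the argument of Theorem~\ref{thm:main} only yields that $p$ \emph{divides} $\det(I-\cM)$, and Example~\ref{quad2} shows that for a general solution of the extension problem the quotient is a genuine extra factor. The theorem you are proving claims exact equality. The paper obtains it not from the divisibility argument but from Sylvester's identity $\det(I_k-\mathcal{A}\mathcal{B})=\det(I_d-\mathcal{B}\mathcal{A})$ applied to the specific $\cM$ above, which collapses $\det(I_k-\cM)$ to $\det(I_d-\cL_t)=p$; without choosing $\cM$ of this special form, or otherwise killing the extra factor, your construction does not prove the statement. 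A smaller but real gap: degree-$1$ homogeneity of the entries of $\cM$ requires the denominator $q$ in the rational decomposition to be homogeneous, which Gondard--Ribenboim does not give directly; the paper arranges this by extracting lowest-degree parts (Lemma~\ref{lemma:gori}), a normalization your ``grading bookkeeping'' silently presupposes.
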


\noindent The precise statement is given in Thm.~\ref{thm:rat}.\\

\emph{Acknowledgements.} We would like to thank Didier Henrion, Pablo
Parrilo, Rainer Sinn, and Cynthia Vinzant for helpful comments and
discussions.

\section*{Known results}

\begin{itemize}
\item For $n=2$, every real-zero polynomial of degree $d$ has a real definite
  determinantal representation of matrix size $d$ by the Helton-Vinnikov
  theorem \cite{hevi}.
\item For $n\ge 3$ and $d$ sufficiently large, a simple count of
  parameters shows that only an exceptional set of polynomials can
  have a real determinantal representation of size $d$. The question
  whether every real-zero polynomial has a definite determinantal
  representation of \emph{any} size became known as the generalized Lax
  conjecture. 
\item The generalized Lax conjecture was disproven by Br\"and\'en who
  even showed the existence of real-zero polynomials $p$ such that no
  power $p^r$ has a determinantal representation of any size
  \cite{bran}. His smallest counterexample, the so-called
  \emph{V\'{a}mos polynomial}, is of degree $4$ in $8$
  variables (see \ref{example:Vamos} below).
\item Netzer and Thom \cite{neth} have proved that only an exceptional set of polynomials can have a determinantal representation,
even if one allows for matrices of arbitrary size. This is true for $n\geq 3$ and $d$ sufficiently large, or $d\geq 4 $ and $n$ sufficiently large.
They also show that if $p$ is a real-zero
  polynomial of degree $2$, then there exists $r\ge 1$ such that $p^r$
  has a determinantal representation. On the other hand, there exists such $p$ where one cannot
  take $r=1$.
\item Another result of Helton, McCullough and Vinnikov \cite{hmv} (see also
  Quarez \cite{qu}) says that every real polynomial has a real symmetric
  determinantal representation, though not necessarily a definite
  one. This means that the constant term in the matrix polynomial cannot be chosen to be the identity matrix in their result.
\item The most general form of the Lax conjecture says that every
  rigidly convex set is a spectrahedron. In terms of determinantal
  representations, this amounts to the following: For every real-zero
  polynomial $p$ there exists another real-zero polynomial $q$ such
  $pq$ has a real definite determinantal representation and such that
  $q$ is non-negative on the rigidly convex set of $p$. This conjecture
  is still wide open, even without the additional positivity condition
  on $q$. Note that if $pq$ has a definite determinantal
  representation, then $q$ is automatically a real-zero polynomial.
\end{itemize}

\section{The Hermite Matrix}
\label{sec:hermite-matrix}

In this section we introduce the parametrized Hermite matrix $\cH(p)$ of a polynomial. It is positive semidefinite at each point if and only if $p$ is a real zero polynomial. If some power of $p$ admits a determinantal representation of the correct size, then $\cH(p)$ even turns out to be a sum of squares of polynomial matrices. 

Let $p=t^d+p_1t^{d-1}+\cdots+p_{d-1}t+p_d\in\R[t]$ be a monic
univariate polynomial of degree $d$ and let
$\lambda_1,\ldots\lambda_d$ be the complex zeros of $p$. Then
\[
N_{k}(p)=\sum_{i=1}^d \lambda_i^k
\]
is called the $k$-th {\it Newton sum} of $p$. The Newton sums are
symmetric functions in the roots, and can thus be expressed as
polynomials in the coefficients $p_i$ of $p$.  The {\it Hermite
  matrix} of $p$ is the symmetric $d\times d$ matrix
\[
H(p):=\left( N_{i+j-2}(p)\right)_{i,j=1,\ldots d}.
\]
It is a Hankel matrix whose entries are polynomial expressions in the
coefficients of $p$. Note that $H(p)=V^TV$, where $V$ is the
Vandermonde matrix with coefficients $\lambda_1,\dots,\lambda_d$. 

The following well-known fact goes back to Hermite. For a proof, see for example Theorem 4.59 in Basu, Pollack and Roy
\cite{bpr}.

\begin{theorem}\label{thm:HermiteMatrix}
  Let $p\in\R[t]$ be a monic polynomial. The rank of $H(p)$ is equal to the number of distinct zeros of $p$ in $\C$. The signature of the Hermite
  matrix $H(p)$ is equal to the number of distinct real zeros of
  $p$. 
  
   In particular, $H(p)$ is positive definite if and only if all
  zeros of $p$ are real and distinct, and $H(p)$ is positive semidefinite if an only of all zeros are real. \qed
\end{theorem}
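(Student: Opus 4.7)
The plan is to diagonalize $H(p)$ in a way that exposes both its rank and its signature through the roots of $p$. Let $\mu_1,\dots,\mu_m\in\C$ be the distinct complex roots with multiplicities $n_1,\dots,n_m$, and set $v(\mu)=(1,\mu,\mu^2,\dots,\mu^{d-1})^T$. Since $N_k(p)=\sum_{\ell=1}^m n_\ell\,\mu_\ell^k$, one reads off
\[
H(p)=\sum_{\ell=1}^m n_\ell\, v(\mu_\ell)\,v(\mu_\ell)^T,
\]
so that the Vandermonde factorization $H(p)=V^TV$ noted in the text collapses over repeated roots into an outer-product sum indexed only by the \emph{distinct} roots.

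For the rank statement, the vectors $v(\mu_1),\dots,v(\mu_m)$ are linearly independent over $\C$ because the associated Vandermonde determinant $\prod_{i<j}(\mu_j-\mu_i)$ is nonzero. Consequently the image of $H(p)$ acting on $\C^d$ is exactly the $\C$-span of these vectors, and $\mathrm{rank}\,H(p)=m$.

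For the signature I would pass to a real decomposition, using that non-real roots appear in conjugate pairs with matching multiplicity. Write the distinct roots as $\mu_1,\dots,\mu_r\in\R$ together with pairs $\mu_{r+j},\bar\mu_{r+j}$ for $j=1,\dots,s$, and decompose $v(\mu_{r+j})=a_j+ib_j$ with $a_j,b_j\in\R^d$. Each conjugate pair contributes
\[
n_{r+j}\bigl(v(\mu_{r+j})v(\mu_{r+j})^T+v(\bar\mu_{r+j})v(\bar\mu_{r+j})^T\bigr)=2n_{r+j}\bigl(a_ja_j^T-b_jb_j^T\bigr),
\]
while each real root contributes the rank-one positive semidefinite summand $n_\ell\,v(\mu_\ell)\,v(\mu_\ell)^T$. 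Arranging $v(\mu_1)^T,\dots,v(\mu_r)^T,a_1^T,b_1^T,\dots,a_s^T,b_s^T$ as the rows of an $m\times d$ real matrix $U$, and the multiplicities into the real block-diagonal matrix
\[
D=\mathrm{diag}(n_1,\dots,n_r)\;\oplus\;\bigoplus_{j=1}^{s}\begin{pmatrix}2n_{r+j}&0\\0&-2n_{r+j}\end{pmatrix},
\]
one obtains a real factorization $H(p)=U^T D U$.

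The decisive step is then to verify that $U$ has full rank $m=r+2s$; I expect this to be the main technical point, but it reduces to the observation that passing from $(v(\mu_{r+j}),v(\bar\mu_{r+j}))$ to $(a_j,b_j)$ is an invertible $\C$-linear change of basis in the plane they span, so the $\C$-span of the rows of $U$ coincides with that of $v(\mu_1),\dots,v(\mu_m)$ and is therefore $m$-dimensional. Granting this, Sylvester's law of inertia transfers the inertia of $D$ to $H(p)$: the entries $n_1,\dots,n_r$ together with the $s$ positive block-eigenvalues contribute $r+s$ positive eigenvalues to $H(p)$, while the $s$ negative block-eigenvalues contribute $s$ negative eigenvalues. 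Hence $\mathrm{rank}\,H(p)=m$ and the signature equals $(r+s)-s=r$, as required; specializing to the cases in which all roots are real, respectively real and distinct, gives the positive (semi)definiteness statement.
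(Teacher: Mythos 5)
Your argument is correct. The paper does not prove this statement itself but cites Theorem 4.59 of Basu, Pollack and Roy, whose proof is essentially the one you give: the collapse of the Vandermonde factorization $H(p)=V^TV$ into the sum $\sum_\ell n_\ell\,v(\mu_\ell)v(\mu_\ell)^T$ over distinct roots, the splitting of conjugate pairs into real and imaginary parts, and Sylvester's law of inertia applied to the resulting full-rank real factorization $U^TDU$ (extending $U$ to an invertible square matrix and $D$ by a zero block, which is the standard way to make the inertia transfer rigorous).
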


Now let $p\in\R[\xvar]$ be a polynomial of degree $d$ in $n$ variables
$\xvar=(x_1,\dots,x_n)$. The polynomial $p$ is called a \emph{real-zero
  polynomial} (with respect to the origin) if $p(0)=1$ and  for every $a\in\R^n$,
the univariate polynomial $p(ta)\in\R[t]$ has only real zeros. We want
to express this condition in terms of a Hermite
matrix. Write $p=\sum_{i=0}^d p_i$ with $p_i$
homogeneous of degree $i$, and let $\phom(\xvar,t)=\sum_{i=0}^d p_it^{d-i}$ be
the homogenization of $p$ with respect to an additional variable $t$.
We consider $\phom$ as a monic univariate polynomial in $t$ and call the
Hermite matrix $H(\phom)$ the \emph{parametrized Hermite matrix of $p$},
denoted $\cH(p)$. Its entries are polynomials in the homogeneous
parts $p_i$ of $p$. The $(i,j)$-entry is a homogeneous polynomial in $\xvar$ of degree $i+j-2.$

\begin{corollary}
  A polynomial $p\in\R[\xvar]$ with $p(0)=1$ is a real-zero polynomial if
  and only if the matrix $\cH(p)(a)$ is positive semidefinite for
  all $a\in\R^n$.
\end{corollary}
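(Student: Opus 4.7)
The plan is to reduce the multivariate statement to the univariate Hermite criterion (Thm.~\ref{thm:HermiteMatrix}) by slicing along lines through the origin and relating the roots of the slice $p(ta)$ to those of the homogenization $\phom(a,t)$.

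First, I would unpack the definitions. Since $p(0)=1$ we have $p_0=1$, so $\phom(x,t)=t^d+p_1(x)t^{d-1}+\cdots+p_d(x)$ is genuinely monic in $t$ of degree $d$. By construction, the Hermite matrix $H(\phom)$ has polynomial entries in $x$, and evaluating at a point $a\in\R^n$ commutes with forming the Newton sums in the coefficients, so
\[
\cH(p)(a)=H\bigl(\phom(a,\,\cdot\,)\bigr).
\]
Thus for each fixed $a$, the question of positive semidefiniteness of $\cH(p)(a)$ becomes the univariate Hermite question for the polynomial $\phom(a,t)\in\R[t]$.

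Second, I would compare the real-zero property of $p$ along the line $\R a$ with the real-rootedness of $\phom(a,t)$. Using that $p_i$ is homogeneous of degree $i$, the slice reads
\[
p(sa)=\sum_{i=0}^d p_i(a)s^i,\qquad\phom(a,t)=\sum_{i=0}^d p_i(a)t^{d-i}=t^d\,p(a/t).
\]
Writing out $p(sa)=p_d(a)\prod_{i=1}^{d'}(s-s_i)$ (where $d'=\deg_s p(sa)\le d$, and all $s_i\ne 0$ because $p(sa)|_{s=0}=1$), one computes that
\[
\phom(a,t)=t^{d-d'}\prod_{i=1}^{d'}(1-ts_i)\cdot(\text{const.})=t^{d-d'}\prod_{i=1}^{d'}(t-1/s_i),
\]
so the roots of $\phom(a,\,\cdot\,)$ are exactly the reciprocals $1/s_i$ together with a root at $0$ of multiplicity $d-d'$. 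In particular, $\phom(a,t)$ has only real roots in $t$ if and only if $p(sa)$ has only real roots in $s$. The step requiring a bit of care is the case $p_d(a)=0$, where the degree of the slice drops and one picks up a root at $t=0$; but since $0$ is real this causes no harm.

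Third, Theorem~\ref{thm:HermiteMatrix} applied to the monic polynomial $\phom(a,\,\cdot\,)\in\R[t]$ tells us that $H(\phom(a,\,\cdot\,))$ is positive semidefinite exactly when all its roots are real. Chaining the equivalences,
\[
\cH(p)(a)\succeq 0\ \Longleftrightarrow\ \phom(a,t)\text{ has only real roots}\ \Longleftrightarrow\ p(ta)\text{ has only real roots},
\]
and quantifying over $a\in\R^n$ yields the claim. The only real obstacle is the bookkeeping for the reciprocal relation when the degree drops at $a$; everything else is a direct application of the univariate result.
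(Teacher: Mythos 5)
Your argument is correct and follows essentially the same route as the paper: evaluate $\cH(p)$ at $a$, apply the univariate Hermite criterion to the monic polynomial $\phom(a,t)=t^dp(a/t)$, and identify its roots with the reciprocals of the roots of $p(ta)$. You are in fact somewhat more careful than the paper's two-line proof, since you explicitly handle the degree-drop case $p_d(a)=0$, where $\phom(a,\cdot)$ acquires a (harmless, real) root at $t=0$; the only blemish is the leading coefficient in your factorization of $p(sa)$, which should be $p_{d'}(a)$ rather than $p_d(a)$ when the degree drops.
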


\begin{proof}
  By Theorem~\ref{thm:HermiteMatrix}, $\cH(p)(a)$ is positive
  semidefinite for $a\in\R^n$ if and only if the univariate polynomial
  $t^dp(a_1t^{-1},\dots,a_nt{-1})$ has only real zeros. Substituting $t^{-1}$
  for $t$, we see that this is equivalent to $p(ta)$ having only
  real zeros.
\end{proof}

The following is Proposition 2.1 in Netzer and Thom \cite{neth}.

\begin{proposition}\label{prop:eigenvalues} Let $\cM=x_1M_1+\cdots
  +x_nM_n$ be a symmetric linear matrix polynomial, and let
  $p=\det(I-\cM)$. Then for each $a\in\R^n,$ the nonzero eigenvalues
  of $\cM(a)$ are in one to one correspondence with the zeros of the
  univariate polynomial $p(ta)$, counting multiplicities. The
  correspondence is given by the rule $\lambda \mapsto
  \frac{1}{\lambda}$.\qed
\end{proposition}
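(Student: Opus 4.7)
The plan is to reduce the statement to an explicit factorization of the specialized characteristic polynomial. Fix $a\in\R^n$ and set $A=\cM(a)=\sum_{i=1}^n a_iM_i$, a real symmetric $d\times d$ matrix, where $d$ is the common size of the $M_i$. By construction $p(ta)=\det\bigl(I-t\cM(a)\bigr)=\det(I-tA)$, viewed as a polynomial in the variable $t$.

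The main step is the spectral theorem: write $A=UDU^T$ with $U$ orthogonal and $D=\mathrm{diag}(\mu_1,\dots,\mu_d)$, where $\mu_1,\dots,\mu_d\in\R$ are the eigenvalues of $A$ listed with algebraic multiplicity. Since $I-tA=U(I-tD)U^T$, multiplicativity of the determinant gives
\[
p(ta)=\det(I-tD)=\prod_{i=1}^d(1-t\mu_i).
\]
From this product expansion the claim is immediate: a factor with $\mu_i=0$ is identically $1$ and contributes nothing to the zero set, while a factor with $\mu_i\ne 0$ contributes a zero at $t=1/\mu_i$ whose multiplicity equals the multiplicity of $\mu_i$ as an eigenvalue. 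Collecting over $i$, the zeros of $p(ta)$ counted with multiplicity are precisely the values $1/\mu_i$ as $\mu_i$ ranges over the nonzero eigenvalues of $A$, also counted with multiplicity, which is exactly the asserted bijection $\lambda\mapsto 1/\lambda$.

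There is no genuine obstacle; the content is just multiplicativity of the determinant after a diagonalization. The only point that warrants care is the bookkeeping of multiplicities: each zero eigenvalue of $\cM(a)$ lowers the $t$-degree of $p(ta)$ by one without producing a root on the $t$-side, which is precisely why the correspondence is between the \emph{nonzero} eigenvalues of $\cM(a)$ and \emph{all} zeros of $p(ta)$, and why the degree of $p(ta)$ equals the rank of $\cM(a)$. Symmetry of the $M_i$ is used only to guarantee that the $\mu_i$ are real; the underlying factorization $\det(I-tA)=\prod_i(1-t\mu_i)$ is valid for any square matrix over any field and can alternatively be read off from the identity $\det(I-tA)=t^d\det(t^{-1}I-A)$ applied to the usual characteristic polynomial.
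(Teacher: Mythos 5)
Your proof is correct. The paper itself gives no argument for this proposition --- it is quoted with a reference to Proposition 2.1 of Netzer and Thom \cite{neth} --- so there is nothing to compare against; your reduction to $p(ta)=\det(I-t\cM(a))=\prod_i(1-t\mu_i)$ via the spectral theorem is the standard argument, the multiplicity bookkeeping (zero eigenvalues dropping the $t$-degree, nonzero ones giving roots $1/\mu_i$) is handled correctly, and your closing remark that symmetry is only needed for realness of the $\mu_i$, not for the factorization itself, is accurate.
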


\begin{lemma}\label{lemma:hermtrace}
  Let $p\in\R[\xvar]$ be a real-zero polynomial of degree $d$, and assume that
  $p^r=\det(I-\cM)$ is a symmetric determinantal representation of size $k$, for some $r>0$. Then
\[
\cH(p)_{i,j}=\frac 1r\cdot\biggl(\trace\bigl(\cM^{i+j-2}\bigr)\biggr),
\]
except possibly for $(i,j)=(1,1)$, where $\cH(p)_{1,1}=d$ and $\trace\bigl(\cM^0\bigr)=k$.
\end{lemma}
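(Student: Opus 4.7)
The plan is to prove the polynomial identity $\cH(p)_{i,j} = \frac{1}{r}\trace(\cM^{i+j-2})$ (for $i+j-2 \geq 1$) pointwise on $\R^n$; since both sides are polynomials in $\xvar$, pointwise equality suffices. Fix $a \in \R^n$ and substitute $ta$ into the determinantal relation $p^r = \det(I-\cM)$ to obtain the univariate identity $p(ta)^r = \det(I - t\cM(a))$ in $\R[t]$. Both sides have constant term $1$, which is the feature that makes the subsequent power-series manipulation legitimate.

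The key tool is the formal expansion $\log\det(I - tA) = \trace\log(I - tA) = -\sum_{k\geq 1} \trace(A^k)\,t^k/k$ in $\R[[t]]$, valid for any $k\times k$ matrix $A$. Applied to $A = \cM(a)$ this gives $-\log\det(I - t\cM(a)) = \sum_{k\geq 1} \frac{\trace(\cM(a)^k)}{k}\,t^k$. On the other side I would factor $p(ta) = \prod_j(1 - t/\mu_j)$ over the nonzero complex roots $\mu_j$ of $p(ta)$ (this factorization is valid because $p(ta)$ has constant term $1$), obtaining $-\log p(ta)^r = r\sum_{k\geq 1} \tfrac{1}{k}\bigl(\sum_j \mu_j^{-k}\bigr) t^k$. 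Equating coefficients of $t^k$ for each $k \geq 1$ yields $r\sum_j \mu_j^{-k} = \trace(\cM(a)^k)$.

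To close the argument I would identify $\sum_j \mu_j^{-k}$ with $N_k(\phom(a,\cdot))$. Since $\phom(a,t) = \sum_i p_i(a)\, t^{d-i}$ and $p(ta) = \sum_i p_i(a)\, t^i$ have reversed coefficient sequences, the nonzero roots of $\phom(a,\cdot)$ are precisely the reciprocals $\mu_j^{-1}$ of the nonzero roots of $p(ta)$, with matching multiplicities; and zero roots contribute nothing to $N_k$ for $k \geq 1$. This gives $N_k(\phom(a,\cdot)) = \sum_j \mu_j^{-k}$, hence the desired equality at every entry $(i,j)$ with $i+j-2 \geq 1$. For $(i,j)=(1,1)$ the $(1,1)$-entry equals $N_0(\phom) = d$ because $\phom$ is monic of degree $d$, while $\trace(\cM^0) = \trace(I_k) = k$; the gap $k - rd$ is exactly the contribution of the zero eigenvalues of $\cM(a)$ that are invisible to the $N_k$ with $k\ge 1$. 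I expect the only real obstacle to be the bookkeeping around the reciprocal correspondence between roots of $p(ta)$ and $\phom(a,\cdot)$, together with the harmless but necessary observation that $p(0)=1$ makes both formal logarithms well-defined; the algebraic heart is simply the identity $\log\det = \trace\log$, which converts the multiplicative relation $p(ta)^r = \det(I-t\cM(a))$ into the additive relation on Newton sums.
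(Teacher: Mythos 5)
Your proof is correct. It reaches the same pivotal identity as the paper, namely $\trace\bigl(\cM(a)^s\bigr)=r\cdot N_s\bigl(\phom(a,\cdot)\bigr)$ for $s\ge 1$, but by a different route: the paper simply quotes Proposition~\ref{prop:eigenvalues} (the correspondence $\lambda\mapsto 1/\lambda$ between nonzero eigenvalues of $\cM(a)$ and zeros of $p(ta)$, each zero of $p(ta)$ occurring $r$ times among the eigenvalues because $\det(I-\cM)=p^r$), and then observes that the inverses of the zeros of $p(ta)$ are exactly the zeros of $t^dp(t^{-1}a)$. You instead rederive the power-sum identity from scratch via the formal expansion $-\log\det(I-tA)=\sum_{k\ge1}\trace(A^k)t^k/k$ applied to the relation $p(ta)^r=\det(I-t\cM(a))$, together with the factorization $p(ta)=\prod_j(1-t/\mu_j)$. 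What your approach buys is self-containment — you do not need the imported eigenvalue-correspondence proposition, and the multiplicity-$r$ bookkeeping and the invisibility of the zero eigenvalues for $k\ge1$ fall out automatically from the generating-function identity; what the paper's approach buys is brevity, since the cited proposition does all the work in one line. Your handling of the reciprocal correspondence between the roots of $p(ta)$ and of $\phom(a,\cdot)$, of the exceptional $(1,1)$-entry, and of the reduction from a polynomial identity to a pointwise one are all correct and match the paper's conventions (in particular, $\cM$ is homogeneous linear, so $\cM(ta)=t\cM(a)$, which is what you need to pass from $p^r=\det(I-\cM)$ to $p(ta)^r=\det(I-t\cM(a))$).
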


\begin{proof}
  For each $a\in \R^n$, the trace of $\cM(a)^{s}$ is the $s$-power
  sum of the nonzero eigenvalues of $\cM(a)$. These eigenvalues are
  the inverses of the zeros of $p(ta)$, by Proposition
  \ref{prop:eigenvalues}, but each such zero gives rise to $r$ many
  eigenvalues. Since the zeros of $p(ta)$ correspond to the inverses
  of the zeros of $t^dp(t^{-1}a)$, the trace of $\cM(a)^s$ equals the
  $s$-power sum of the zeros of $t^d p(t^{-1}a)$ multiplied with
  $r$. This proves the claim.
\end{proof}

\begin{definition}
Let $\cH\in{\rm Sym}_d(\R[\xvar])$ be a symmetric matrix with polynomial entries. $\cH$ is a \emph{sum of squares}, if there is a $d'\times d$-matrix $\cQ$ with polynomial entries, such that $\cH=\cQ^T\cQ$. This is equivalent to the existence of $d'$ many $d$-vectors $\cQ_i$ with polynomial entries, such that $\cH=\sum_{i=1}^k \cQ_i\cQ_i^T.$
\end{definition}

\begin{theorem}\label{thm:sos} 
  Let $p\in\R[\xvar]$ be a real-zero polynomial of degree $d$. If a power
  $p^r$ admits a definite determinantal representation of
  size $r\cdot d$, for some $r>0$, then the parametrized Hermite
  matrix $\cH(p)$ is a sum of squares.
\end{theorem}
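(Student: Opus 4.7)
The plan is to identify $\cH(p)$ with a scalar multiple of a Gram matrix coming out of Lemma~\ref{lemma:hermtrace}, and then exhibit this Gram matrix explicitly as $V^T V$ for a polynomial matrix $V$ built from the powers of $\cM$.

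First I would use the size hypothesis to make Lemma~\ref{lemma:hermtrace} uniform. The lemma gives $\cH(p)_{i,j} = \frac{1}{r}\trace(\cM^{i+j-2})$ for all $(i,j)\neq (1,1)$, while $\cH(p)_{1,1} = d$ and $\trace(\cM^0) = k$. Since by hypothesis $k = r\cdot d$, the corner $(1,1)$ also satisfies $\cH(p)_{1,1} = \frac{1}{r}\trace(\cM^0)$, and so
\[
\cH(p) \;=\; \frac{1}{r}\bigl(\trace(\cM^{i-1}\cM^{j-1})\bigr)_{i,j=1,\ldots,d}.
\]

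Next, using the symmetry of $\cM$, each power $\cM^{i-1}$ is a symmetric polynomial matrix, and the expression $\trace(\cM^{i-1}\cM^{j-1})$ becomes the Frobenius inner product
\[
\trace(\cM^{i-1}\cM^{j-1}) \;=\; \sum_{a,b=1}^k (\cM^{i-1})_{ab}(\cM^{j-1})_{ab}.
\]
I would then vectorize: for each $i$ let $v_i \in \R[\xvar]^{k^2}$ be the column vector obtained by stacking the entries of $\cM^{i-1}$, and let $V$ be the $k^2\times d$ polynomial matrix with columns $v_1,\ldots,v_d$. By construction $V^T V$ is precisely the matrix $\bigl(\trace(\cM^{i-1}\cM^{j-1})\bigr)_{i,j}$, so setting $\cQ = \frac{1}{\sqrt{r}}V$, which has entries in $\R[\xvar]$ since $r>0$, yields $\cH(p) = \cQ^T\cQ$ and proves the claim.

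There is no serious obstacle once Lemma~\ref{lemma:hermtrace} is in hand: the sum-of-squares structure is automatic from the Gram matrix construction on the symmetric polynomial matrices $\cM^{i-1}$. The only subtle point, and the sole place where the hypothesis on the size of the determinantal representation enters, is in matching the corner entry $\cH(p)_{1,1}=d$ to $\frac{1}{r}\trace(\cM^0)=\frac{k}{r}$; without the assumption $k=rd$ this identification fails, and the Gram matrix would not literally equal $\cH(p)$.
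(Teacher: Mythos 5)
Your proposal is correct and follows essentially the same route as the paper: both reduce the claim to the identity $\cH(p)_{i,j}=\frac1r\trace(\cM^{i-1}\cM^{j-1})$ via Lemma~\ref{lemma:hermtrace} (with the hypothesis $k=rd$ handling the $(1,1)$-entry) and then realize the trace form as a Gram matrix built from the entries of the powers $\cM^{i-1}$; your matrix $V$ is just the paper's family of vectors $\cQ_{\ell m}$ assembled column-wise. The only cosmetic difference is that you rescale by $\frac{1}{\sqrt r}$ to get $\cH(p)=\cQ^T\cQ$ exactly, where the paper stops at $r\cH(p)=\sum_{\ell,m}\cQ_{\ell m}\cQ_{\ell m}^T$.
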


\begin{proof} Let $p^r=\det(I-\cM)$ with $\cM$ of size $k=rd$, and
  denote by $q^{(s)}_{\ell m}$ the $(\ell,m)$-entry of $\cM^s$. Put 
$\cQ_{\ell m}=\left(q^{(0)}_{\ell m}, \ldots,q^{(d-1)}_{\ell m}\right)^T\in
\R[\xvar]^d$. Then we find 
\[
\sum_{\ell,m=1}^k \cQ_{\ell m}\cQ_{\ell m}^T = \left(\sum_{\ell,m=1}^k
  q^{(i-1)}_{\ell m}q^{(j-1)}_{\ell m} \right)_{i,j=1,\ldots, d}= \left( {\rm
    tr}(\cM^{i-1}\cM^{j-1})\right)_{i,j=1,\dots,d}=r \cH(p),
\]
by Lemma \ref{lemma:hermtrace}.
\end{proof}

\begin{remarks}(1) If the determinantal representation of $p^r$ is of size $k>rd$, then $\cH(p)$ becomes a sum of squares after increasing the $(1,1)$-entry from $d$ to $k/r$. This is clear from the above proof.

(2) It was shown in Netzer and Thom \cite{neth} that if a polynomial
$p$ admits a definite determinantal representation, then it admits one of size $dn$, where $d$ is the degree of $p$ and $n$ is the number of variables. So if \emph{any} power  $p^r$ admits a determinantal representation of \emph{any} size, then $\cH(p)$ is a sum of squares, after increasing the $(1,1)$-entry from $d$ to $dn$. Note that this is independent of $r.$

(3) The determinant of $\cH(p)$ is the discriminant of $t^dp(t^{-1}x)$
in $t$. If $\cH(p)=\cQ^T\cQ$, it follows from the Cauchy-Binet formula
that the determinant of $\cH(p)$ is a sum of squares in $\R[x]$.
Thus, by the above theorem, the discriminant of $\det(tI+\cM)$ in $t$
is a sum of squares, a fact that has long been known, at least since
Borchardt's work from 1846 \cite{borch}.

(4) The sums-of-squares decomposition of $\cH(p)$ obtained by
Thm.~\ref{thm:sos} from a determinantal representation
$p^r=\det(I-\cM)$ is extremely special. In principle, it is possible
to characterize the decompositions of $\cH(p)$ coming from a
determinantal representation by a recurrence relation that they must
satisfy. But this does not appear to be a promising approach for finding 
determinantal representations.

\end{remarks}

\begin{example}\label{quad1}
It was shown in Netzer and Thom \cite{neth} that if $p$ is quadratic,
a high enough power admits a definite determinantal
representation of the correct size. Thus $\cH(p)$ is a sum of squares in this
case. This can also be shown directly. Write 
\[
p=\xvar^TA\xvar + b^T\xvar +1
\]
with $A\in {\rm Sym}_n(\R)$ and $b\in\R^n.$ Then $p$ is a real-zero
polynomial if and only if $bb^T-4A\succeq 0$, as is easily checked. We
find  $t^2p(t^{-1}\xvar)=\xvar^T A\xvar + b^T\xvar \cdot t + t^2,$ and so we compute 
\[
\cH(p)=\left(\begin{array}{cc}2 & -b^T\xvar \\-b^T\xvar & \xvar^T (
    bb^T-2A)\xvar\end{array}\right).
\]
Write $bb^T-4A=\sum_{i=1}^n v_iv_i^T$ as a sum of squares of column
vectors $v_i\in\R^n$. Set 
\[
\cQ=\left(\begin{array}{cc}1 & -\frac12 b^T \xvar \\0 & \frac12 v_1^T \xvar
    \\\vdots & \vdots \\0 & \frac12 v_n^T \xvar\end{array}\right).
\]
Then $\cH(p)=2\cdot \cQ^T\cQ$.\qed
\end{example}

\begin{example}\label{example:Vamos}
We consider Br\"and\'{e}n's example from \cite{bran}. It is  constructed from the V\'{a}mos cube as shown in   Figure \ref{branden}. Its  set of bases $\mathcal{B}$ consists of all four element subsets of $\{1,\ldots,8\}$ that do not lie in one of the five affine hyperplanes. Define $$q:= \sum_{B\in\mathcal{B}}\ \prod_{i\in B} x_i,$$ a degree four polynomial in $\R[x_1,\ldots,x_8].$ It contains as its terms the product of any choice of  four pairwisely different variables, except for the following five: $$ x_1x_4x_5x_6, x_2x_3x_5x_6, x_2x_3x_7x_8, x_1x_4x_7x_8, x_1x_2x_3x_4.$$  Now $p=q(x_1+1,\ldots,x_8+1)$ turns out to be a real-zero polynomial, of which Br\"and\'{e}n has shown that no power has a determinantal representation. 

\begin{figure}[ht]\caption{The V\'{a}mos Cube}\label{branden}
\begin{center} 
\begin{tikzpicture}[]
\draw[thick] (0,0) -- (1.5,-0.5){};
\draw[thick] (1.5,-0.5) -- (5,1){};
\draw[thick] (0,0) -- (0.8,-1.2){};
\draw[thick] (0,0) -- (0.8,0.9){};
\draw[thick] (0.8,0.9) -- (1.5,-0.5){};
\draw[thick] (0.8,-1.2) -- (1.5,-0.5){};
\draw[thick] (0.8,0.9) -- (4.3,2.4){};
\draw[thick] (4.3,2.4) -- (5,1){};
\draw[thick] (0.8,-1.2) -- (4.3,0.3){};
\draw[thick] (4.3,0.3) -- (5,1){};
\draw[thick] (0,0) -- (0.9 ,0.9* 3/7){};
\draw[thick] (1.2,1.2*3/7) -- (3.5,1.5){};
\draw[thick] (3.5,1.5) -- (4.3,2.4){};
\draw[thick] (3.5,1.5) -- (5,1){};
\draw[thick] (3.5,1.5) -- (3.5 +0.5,1.5 -1.5*0.5){};
\draw[thick] (3.5 +0.7,1.5 -1.5*0.7)--(4.3,0.3){};
\filldraw(0, 0) circle (2pt);
\filldraw(0.8, -1.2) circle (2pt);
\filldraw(0.8, 0.9) circle (2pt);
\filldraw(1.5, -0.5) circle (2pt);
\filldraw(3.5, 1.5) circle (2pt);
\filldraw(4.3, 0.3) circle (2pt);
\filldraw(4.3, 2.4) circle (2pt);
\filldraw(5, 1) circle (2pt);
\draw (-0.3,0) node {1};
\draw (0.8,-1.5) node {7};
\draw (1.6,-0.1) node {2};
\draw (0.8,1.3) node {5};
\draw (4.3,2.8) node {6};
\draw (3.5,1.8) node {4};
\draw (5.3,1) node {3};
\draw (4.4,-0.1) node {8};
\end{tikzpicture}
\end{center}
\end{figure}
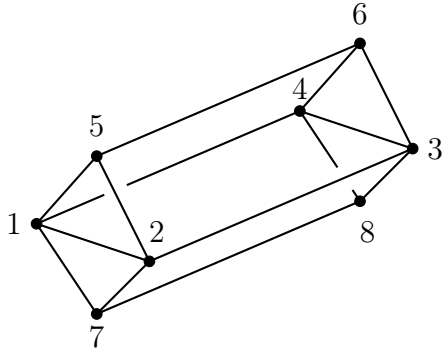

We can apply the sums-of-squares-test to the Hermite matrix $\cH(p)$ here. Unfortunately, the matrix is too complicated to do the computations by hand. When using a numerical sums-of-squares-plugin for matlab, such as Yalmip, the result however indicates that $\cH(p)$ is not a sum of squares. In view of Theorem \ref{thm:sos} this  shows again that no power of $p$ admits a determinantal representation. Note that if some power  $p^r$ has a determinantal representation, then it has one of size $4r$. This was proven by Br\"and\'{e}n or follows more generally from Netzer and Thom, Theorem 2.7 \cite{neth}.

Finally, we can apply the sums-of-squares-test also to small perturbations of Br\"and\'{e}n's polynomial. For example, $p$ can be approximated as closely as desired by real-zero polynomials, which have only simple roots on each line through the origin (in other words, the Hermite matrix is positive definite at each point $a\neq 0$). Such a smoothening procedure is for example describe in Nuij \cite{nu}. Still, Yalmip reports that the Hermite matrix is not a sum of squares, if the approximation is close enough. This is exactly what one expects, since the cone of sums of squares of polynomial matrices is closed, and the Hermite matrix depends continuously on the polynomial.
\end{example}

\section{A general construction method}\label{section:construction}

In this section we are interested in the converse of the above result. Namely, can a sums-of-squares decomposition of $\cH(p)$ be used to produce a definite determinantal representation of $p$ or some multiple? We describe a method to do this, which amounts to only solving a system of linear equations.

Let $p=1+p_1+\cdots +p_d\in\R[\xvar]$ be a real-zero polynomial of degree
$d$. 
Since the matrix
$\cH(p)$ is everywhere positive semidefinite, it can be
expressed as a sum of squares if one allows denominators in
$\R[\xvar]$. This generalization of Artin's solution to Hilbert's 17th
problem was first proved by Gondard and Ribenboim in \cite{gori}. We
need to make a slight adjustment to our situation.

\begin{lemma}\label{lemma:gori}
There exist a matrix polynomial $\cQ\in{\rm Mat}_{k\times
  d}\bigl(\R[\xvar]\bigr)$, for some $k>0$, and a homogeneous non-zero polynomial
$q\in\R[\xvar]$ such that
\[
q^2\cH(p)=\cQ^T\cQ.
\]
\end{lemma}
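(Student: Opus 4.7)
The plan is to combine the matrix version of Gondard and Ribenboim's theorem with the diagonal scaling symmetry of $\cH(p)$ so as to promote a generic denominator into a homogeneous one.

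First, by the preceding corollary, $\cH(p)(a)$ is positive semidefinite for every $a\in\R^n$, so \cite{gori} yields a polynomial matrix $\cQ_{0}$ and a nonzero polynomial $h\in\R[\xvar]$, a priori not homogeneous, such that $h^{2}\cH(p)=\cQ_{0}^{T}\cQ_{0}$.

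Next, I would exploit the scaling identity $\cH(p)(\lambda\xvar)=D_{\lambda}\cH(p)(\xvar)D_{\lambda}$, where $D_{\lambda}=\mathrm{diag}(1,\lambda,\dots,\lambda^{d-1})$, which holds because the $(i,j)$-entry of $\cH(p)$ is homogeneous of degree $i+j-2$. Substituting $\xvar\mapsto\lambda\xvar$ into the Gondard--Ribenboim identity, replacing $\cH(p)(\lambda\xvar)$ via the scaling, and multiplying by $D_{\lambda}^{-1}$ on both sides yields
\[
h(\lambda\xvar)^{2}\,\cH(p)(\xvar) = \bigl(\cQ_{0}(\lambda\xvar)D_{\lambda}^{-1}\bigr)^{T}\bigl(\cQ_{0}(\lambda\xvar)D_{\lambda}^{-1}\bigr).
\]
Multiplying through by $\lambda^{2(d-1)}$ absorbs the negative powers of $\lambda$ into the right-hand factors and produces a polynomial identity in $(\lambda,\xvar)$,
\[
\bigl(\lambda^{d-1}h(\lambda\xvar)\bigr)^{2}\,\cH(p)(\xvar) = \mathcal{S}(\lambda,\xvar)^{T}\mathcal{S}(\lambda,\xvar),
\]
where $\mathcal{S}(\lambda,\xvar)$ is the matrix with $(k,i)$-entry $\lambda^{d-i}(\cQ_{0})_{ki}(\lambda\xvar)$.

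The final step extracts a homogeneous denominator by matching the lowest nonzero power of $\lambda$ on each side. Decompose $h=\sum_{k}h^{(k)}$ into homogeneous parts and let $m_{0}=\min\{k:h^{(k)}\neq 0\}$; the lowest-$\lambda$-power coefficient of $(\lambda^{d-1}h(\lambda\xvar))^{2}$ is then $(h^{(m_{0})}(\xvar))^{2}$, and $h^{(m_{0})}$ is nonzero and homogeneous of degree $m_{0}$. Writing $\mathcal{S}(\lambda,\xvar)=\sum_{a}\lambda^{a}\mathcal{S}_{a}(\xvar)$ and letting $a_{0}$ be the smallest index with $\mathcal{S}_{a_{0}}\neq 0$, the lowest-$\lambda$-power coefficient of $\mathcal{S}^{T}\mathcal{S}$ is $\mathcal{S}_{a_{0}}^{T}\mathcal{S}_{a_{0}}$, again a sum of squares. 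Equating the two coefficient matrices gives $q:=h^{(m_{0})}$ and $\cQ:=\mathcal{S}_{a_{0}}$ with $q^{2}\cH(p)=\cQ^{T}\cQ$, as required.

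The main subtlety that will require care is the insertion of the scaling factor $\lambda^{d-1}$ before passing to the leading $\lambda$-coefficient: without it the right-hand side would carry negative powers of $\lambda$, obstructing the sum-of-squares form. The decisive observation making the argument work is that the lowest-degree $\lambda$-coefficient of any $M(\lambda)^{T}M(\lambda)$ equals $M_{a_{0}}^{T}M_{a_{0}}$, itself a sum of matrix squares.
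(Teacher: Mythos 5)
Your proof is correct and is essentially the paper's argument in different clothing: extracting the lowest $\lambda$-coefficient of $\lambda^{d-i}(\cQ_0)_{ki}(\lambda\xvar)$ produces exactly the homogeneous part of degree $m_0+i-1$ of each entry in the $i$-th column, which is the matrix $\cQ_{\rm min}$ the paper builds by direct degree comparison, and both arguments hinge on the same fact that the lowest-degree component of a sum of squares is again a sum of squares.
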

\begin{proof} By the original result of Gondard and Ribenboim
  \cite{gori} there is some non-zero polynomial $q\in\R[\xvar]$ such
  that $q^2\cH(p)=\cQ^T\cQ$ for some $\cQ\in{\rm Mat}_{k\times
    d}(\R[\xvar])$. We want to make $q$ homogeneous.

  Write $q=q_r + q_{r+1}+ \cdots + q_R$, where each $q_i$ is
  homogeneous of degree $i$, and $q_r\neq 0$, $q_R\neq 0$. Since the
  $i$-th diagonal entry in $\cH(p)$ is homogeneous of degree
  $2(i-1)$, each entry in the $i$-th column of $\cQ$ has homogeneous
  parts of degree between $r+i-1$ and $R+i-1.$ Let $\cQ_{\rm min}$ be
  the matrix one obtains from $\cQ$ by choosing only the homogeneous
  part of degree $r+i-1$ of each entry in each $i$-th column. Put
  $\widetilde{\cQ}=\cQ-\cQ_{\rm min}$ and note that all entries in the
  $i$-th column of $\widetilde{\cQ}$ have non-zero homogeneous parts
  only in degrees at least $r+i$. We now compute $q^2\cH(p)=
  \cQ_{\rm min}^T\cQ_{\rm min} + \cQ_{\rm min}^T\widetilde{\cQ} +
  \widetilde{\cQ}^T\cQ_{\rm min} + \widetilde{\cQ}^T\widetilde{\cQ}$, compare
  degrees on both sides, and find $q_{r}^2\cH(p)=\cQ_{\rm
    min}^T\cQ_{\rm min}$, as desired.
\end{proof}

We will now describe the setup that we are going to use for the rest
of this section. We fix a representation of $q^2\cH(p)=\cQ^T\cQ$
as in Lemma \ref{lemma:gori}. As before, let $\phom=t^d\cdot p(t^{-1}\xvar) =
t^d +p_1t^{d-1}+\cdots +p_d\in\R[\xvar,t]$, and consider the free $\R[\xvar]$-module
\[
A = \R[\xvar,t]/(\phom) \cong \bigoplus_{i=0}^{d-1} \R[\xvar]\cdot t^i
\cong\R[\xvar]^d.
\]
Since $P$ is homogeneous, the standard grading induces  a grading on $A$. We shift this grading by $r$, the degree of $q$, and obtain a grading with $\deg(t^i)=r+i$ for
$i=0,\ldots, d-1$. This turns $A$ into
a graded $\R[\xvar]$-module, where $\R[\xvar]$ is equipped with the standard
grading. Furthermore, we equip $A$ with a symmetric $\R[\xvar]$-bilinear
and $\R[\xvar]$-valued map $\langle \cdot , \cdot \rangle_p$ defined by
\[
\langle f, g\rangle_p := f^T\left( q^2  \cH(p)\right) g,
\]
for $f=(f_1,\ldots,f_d)^T$ and $g=(g_1,\ldots,g_d)^T$ in $A$.

Next, consider the map $\cL_t\colon A\rightarrow A$ given by
multiplication with $t$. This is an $\R[\xvar]$-linear map which we can
compute with respect to our chosen basis: 
\[
\cL_t \colon (f_1,\ldots,f_d)^T\mapsto
  (-p_df_d, f_1-p_{d-1}f_d, \ldots, f_{d-1} -p_1f_d)^T.
\]
Note that $\cL_t$ is of degree $1$ with respect to the grading,
i.e.~$\deg\bigl(\cL_t(f)\bigr)=\deg(f)+1$. 
We identify $\cL_t$ with the matrix that represents it, so that
\[
\cL_t=\left(\footnotesize\begin{array}{cccc}
0 & 0 & 0 & -p_d \\1 & 0 & 0 & -p_{d-1}
    \\0 & \ddots & 0 & \vdots \\0 & \cdots & 1 &
    -p_1
\end{array}\right),
\]
which is exactly the {\it companion matrix} of
$\phom$, viewed as a univariate polynomial in $t$. It is well known and
easy to see that $\phom$ is the characteristic polynomial of $\cL_t$, so that
\[
\det\left(I-\cL_t\right)=p
\]
\begin{lemma}\label{lemma:selfad}
The linear map $\cL_t$ is self-adjoint with respect to $\langle \cdot,\cdot\rangle_p$, i.e. $$\langle \cL_tf,g\rangle_p=\langle f,\cL_tg\rangle_p$$ holds for all $f,g\in A.$
\end{lemma}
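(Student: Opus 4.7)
The plan is to recognize the bilinear form $\langle \cdot, \cdot\rangle_p$, up to the scalar factor $q^2$, as the \emph{trace form} on the $\R[\xvar]$-algebra $A = \R[\xvar, t]/(\phom)$. For the trace form, self-adjointness of multiplication by $t$ is immediate from associativity of multiplication in $A$. Concretely, each $h \in A$ acts on $A$ by $\R[\xvar]$-linear multiplication $\cL_h$, and $\beta(f,g) := \trace(\cL_{fg})$ defines a symmetric $\R[\xvar]$-bilinear form. Associativity then gives
$$\beta(\cL_t f, g) = \trace(\cL_{tfg}) = \beta(f, \cL_t g).$$

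The main step is to identify the Gram matrix of $\beta$ in the basis $1, t, \ldots, t^{d-1}$ with $\cH(p)$. Since $\cL_t$ is the companion matrix of $\phom$, its eigenvalues over an algebraic closure of $\R(\xvar)$ are precisely the roots of $\phom$ regarded as a univariate polynomial in $t$, so $\trace(\cL_t^k) = N_k(\phom)$. Consequently
$$\beta(t^{i-1}, t^{j-1}) = \trace(\cL_t^{i+j-2}) = N_{i+j-2}(\phom) = \cH(p)_{ij}.$$
In matrix form this reads $\cL_t^T \cH(p) = \cH(p) \cL_t$, which is the asserted self-adjointness for the unscaled Hermite pairing.

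To finish, I would multiply the identity by $q^2 \in \R[\xvar]$; since $\cL_t$ is $\R[\xvar]$-linear, $q^2$ commutes with it, yielding $\cL_t^T (q^2 \cH(p)) = (q^2 \cH(p)) \cL_t$, which translates back to $\langle \cL_t f, g\rangle_p = \langle f, \cL_t g\rangle_p$. The only delicate point is passing through the eigenvalues of $\cL_t$, which requires working over an algebraic closure of $\R(\xvar)$; if one prefers to stay inside $\R[\xvar]$, the identity $\cL_t^T \cH(p) = \cH(p) \cL_t$ can instead be verified by a direct matrix computation in which only the last column is nontrivial, and that column is handled by Cayley--Hamilton ($\phom(\cL_t) = 0$) together with the Newton--Girard formulas expressing the $N_k(\phom)$ in terms of the coefficients $p_i$.
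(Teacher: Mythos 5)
Your proof is correct, and it takes a genuinely different route from the paper's. The paper proves the lemma by brute force on the basis vectors $e_1,\dots,e_d$: the cases $i,j<d$ follow from the Hankel structure of $\cH(p)$, and the remaining case $j<i=d$ is reduced to the identity $\sum_{i=0}^{d}p_{d-i}N_{i+j-1}=0$, which is then checked via the Newton identities. You instead identify $\cH(p)$ as the Gram matrix of the trace form $\beta(f,g)=\trace(\cL_{fg})$ on $A=\R[\xvar,t]/(\phom)$, using $\trace(\cL_t^{k})=N_k(\phom)$ and the fact that $h\mapsto\cL_h$ is a ring homomorphism (so $\cL_{t^{i+j-2}}=\cL_t^{i+j-2}$ even after reduction mod $\phom$); self-adjointness then drops out of associativity, $\trace(\cL_{(tf)g})=\trace(\cL_{f(tg)})$, and the factor $q^2$ is harmless since it is a scalar in $\R[\xvar]$. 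Your argument is more conceptual: it explains \emph{why} the lemma holds (the Hermite pairing \emph{is} the trace form, so multiplication operators are automatically self-adjoint) and recovers the classical interpretation of the Hermite matrix as a byproduct, at the cost of a brief detour through the eigenvalues of the companion matrix over an algebraic closure of $\R(\xvar)$ --- a step that is standard and, as you note, can be replaced by a polynomial identity in the coefficients. The paper's computation is more elementary and self-contained but less illuminating; your closing remark about verifying $\cL_t^T\cH(p)=\cH(p)\cL_t$ column by column via Cayley--Hamilton and Newton--Girard is essentially a description of the paper's own proof.
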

\begin{proof} We may divide by $q^2$ on both sides and hence assume
  that $q=1$.  It is enough to show $\langle \cL_te_i,e_j\rangle_p
  =\langle e_i ,\cL_te_j\rangle_p$ for all $i,j$, where $e_i$ is the
  $i$-th unit vector. For $i,j<d$, this follows from the fact that
  $\cH(p)$ is a Hankel matrix. For $i=j=d$, it is clear from
  symmetry. So assume $j<i=d$. We find $$\langle \cL_te_d,e_j\rangle_p =
  -\sum_{i=1}^d p_{d-i+1} e_i\cH(p)e_j= -\sum_{i=1}^d
  p_{d-i+1} N_{i+j-2},$$ where $N_k$ is the $k$-th Newton sum of
  $\phom$. On the other hand, we compute $\langle
  e_d,\cL_te_j\rangle_p = \langle e_d,e_{j+1}\rangle_p = N_{d+j-1}$. In
  conclusion, we have to show that
\[
\sum_{i=0}^{d}p_{d-i}N_{i+j-1}=0,
\]
where we have set $p_0=1$. This statement is equivalent to
$\sum_{i=0}^d p_i N_{k-i}=0$, where $k=d+j-1\geq d$. This last
equation, however, follows immediately from the Newton identity
$kp_k + \sum_{i=0}^{k-1} p_iN_{k-i}=0$, where we let $p_k=0$ for $k>d$. 
\end{proof}

Let $B=\R[\xvar]^k$. The $k\times d$-matrix $\cQ$ in the decomposition of $\cH(p)$
describes an $\R[\xvar]$-linear map $A=\R[\xvar]^d\rightarrow B$,
$f\mapsto \cQ f$. From the degree structure of $\cH(p)$, we see
that each entry in the $i$-th column of $\cQ$ is homogeneous of degree
$r+i-1$. So $\cQ$ is of degree $0$ with respect to the canonical grading
on $B$.

\begin{lemma}\label{lemma:Qproperties}\mbox{}
\begin{enumerate}
\item If $p$ is square-free, then $\cQ\colon A\rightarrow B$ is
  injective.
\item We have 
\[
\langle f,g\rangle_p = \langle \cQ f,\cQ g\rangle
\] 
for all $f,g\in A$. In other words, $\cQ$ is an isometry, taking
$\langle\;,\;\rangle_p$ to the canonical bilinear form
$\langle\;,\;\rangle$ on $B$.
\end{enumerate}
\end{lemma}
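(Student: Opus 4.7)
Part (2) is a direct calculation: by definition of the bilinear forms and the factorization $\cQ^T\cQ = q^2\cH(p)$ guaranteed by Lemma~\ref{lemma:gori}, one has
\[
\langle\cQ f,\cQ g\rangle = (\cQ f)^T(\cQ g) = f^T\cQ^T\cQ g = f^T\bigl(q^2\cH(p)\bigr)g = \langle f,g\rangle_p.
\]

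For part (1) my plan is to deduce injectivity of $\cQ$ from non-degeneracy of $\cH(p)$ as a matrix over the fraction field $\R(\xvar)$. Suppose $\cQ f = 0$ for some $f\in A$. Applying $\cQ^T$ yields $q^2\cH(p)f = 0$ in $\R[\xvar]^d$; since $q\neq 0$ and $\R[\xvar]$ is an integral domain, this forces $\cH(p)f = 0$. It therefore suffices to show that $\cH(p)$ is invertible over $\R(\xvar)$, i.e.~that $\det\cH(p)$ is a nonzero polynomial in $\xvar$.

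This is the step that uses square-freeness of $p$. As already recalled in Remark~(3) after Theorem~\ref{thm:sos}, the Hermite matrix identity $\cH(p)=V^TV$ with $V$ the Vandermonde matrix of the roots of $\phom$ in $t$ gives $\det\cH(p)=\prod_{i<j}(\lambda_i-\lambda_j)^2$, i.e.~the discriminant of $\phom$ viewed as a univariate polynomial in $t$. Because $\phom$ is monic in $t$, it is primitive as an element of $\R[\xvar][t]$, so by Gauss's lemma its factorizations in $\R[\xvar][t]$ and in $\R(\xvar)[t]$ coincide. Square-freeness of $p$ in $\R[\xvar]$ translates to square-freeness of its homogenization $\phom$ in $\R[\xvar,t]$ (since homogenization preserves the factorization of a polynomial whose irreducible factors all have nonzero constant term, which is the case here because $p(0)=1$), and thus to square-freeness of $\phom$ in $\R(\xvar)[t]$. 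The $t$-discriminant of $\phom$ is therefore nonzero in $\R[\xvar]$, so $\cH(p)$ is invertible over $\R(\xvar)$ and $\cH(p)f = 0$ implies $f=0$ in $\R(\xvar)^d$, hence in $A$.

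I do not foresee any real obstacle. Part (2) is a one-line manipulation, and part (1) comes down to the classical identification of $\det\cH(p)$ with the $t$-discriminant of $\phom$ together with the elementary fact that square-freeness is preserved under the homogenization $p\mapsto\phom$; the transition between $\R[\xvar][t]$ and $\R(\xvar)[t]$ is handled cleanly by monicity in $t$.
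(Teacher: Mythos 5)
Your part (2) is identical to the paper's: both are the one-line computation $f^T\cQ^T\cQ g = f^T(q^2\cH(p))g$. For part (1) you take a genuinely different, purely algebraic route. The paper argues over the reals: from $\cQ f=0$ it gets $q(a)^2f(a)^T\cH(p)(a)f(a)=0$ for all $a$, invokes Theorem~\ref{thm:HermiteMatrix} to see that $\cH(p)(a)$ is positive definite at every $a$ where $p(ta)$ has distinct (necessarily real) roots, and concludes $f(a)=0$ for generic $a$, hence $f=0$. You instead pass to the function field: $\cQ f=0$ gives $\cH(p)f=0$ over the domain $\R[\xvar]$, and you show $\det\cH(p)\neq 0$ by identifying it with the $t$-discriminant of $\phom$ (a fact the paper itself records in Remark~(4)(3) after Theorem~\ref{thm:sos}) and checking that square-freeness of $p$ passes to $\phom$ via homogenization and Gauss's lemma. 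Both arguments ultimately rest on the same fact --- that square-freeness forces the discriminant of $\phom$ in $t$ to be a nonzero polynomial --- but they package it differently. The paper's version is shorter and uses only the vanishing of the quadratic form $\langle f,f\rangle_p$, at the cost of quietly relying on the real-zero hypothesis (for positive definiteness) and on the genericity of the distinct-root locus, which it does not spell out. Your version needs no real-zero hypothesis at all, only square-freeness, and as a by-product establishes that $\cH(p)$ is invertible over $\R(\xvar)$ --- precisely the fact the paper reuses at the start of Section~3 for Theorem~\ref{thm:rat}. One minor remark: your parenthetical condition that the irreducible factors of $p$ have nonzero constant term is not actually needed; homogenization is multiplicative and sends non-associate irreducibles to non-associate irreducibles in any case.
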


\begin{proof}
(2) is immediate from the fact that $q^2\cH(p)=\cQ^T\cQ$. (1)
If $\cQ f=0$, then $$0=\langle \cQ f,\cQ f\rangle= \langle f,f\rangle_p = q^2 \cdot f^T \cH(p)f.$$ For each $a\in\R^n$ for which $p(ta)$ has only distinct roots, the matrix $\cH(p)(a)$ is positive definite. So $f(a)=0$ for generic $a$, and thus $f=0$.
\end{proof}

\noindent Time for a brief summary of what we have done so far.

\begin{setup}\mbox{}
\begin{itemize}
\item
Let $p\in\R[\xvar]$ be a real-zero polynomial of degree $d$ with $p(0)=1$, and let
$\cH(p)$ be its parametrized Hermite matrix. Fix a
decomposition $q^2\cH(p)=\cQ^T\cQ$, where $q$ is homogeneous of
degree $r$ and $\cQ$ is a matrix of size $k\times d$ with entries in $\R[\xvar]$.


\item We have equipped the free module $A=\R[x]^d$ with a particular
  grading and with a bilinear form $\langle\;,\;\rangle_p\colon
  A\times A\rightarrow A$. 

\item Let $B=\R[x]^k$ be equipped with the canonical
  bilinear form and the canonical grading.

\item The map $\cQ\colon A\rightarrow B$ is an isometry and
  of degree $0$.
\item Let $\cL_t$ be the companion matrix of $t^dp(t^{-1}\xvar)$ with
  respect to $t$, so
  that 
\[
\det(I-\cL_t)=p.
\]
The map $\cL_t\colon A\rightarrow A$ is self-adjoint
with respect to $\langle \cdot,\cdot\rangle_p$ and of degree $1$.
\end{itemize}
\end{setup}


\noindent The following is our main result.

\begin{theorem}\label{thm:main}
 Let $p\in\R[\xvar]$ be a square-free real-zero polynomial of
  degree $d$ with $p(0)=1$. Assume that there exists a
  homogeneous symmetric linear matrix polynomial $\cM$ of size $k\times k$ such
  that the following diagram commutes:
\[
\xymatrix{ \R[\xvar]^d=A \ar@{->}^\cQ[r] \ar@{->}^{\cL_t}[d]& B=\R[\xvar]^{k}
  \ar@{->}^{\cM}[d] \\ \R[\xvar]^d= A \ar@{->}^{\cQ}[r] & B=\R[\xvar]^{k}}
\]
Then $p$ divides $\det(I -\cM)$.
\end{theorem}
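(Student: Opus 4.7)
The plan is to pass to the fraction field $K=\R(\xvar)$, realize $\cL_t$ as the restriction of $\cM$ to an $\cM_K$-invariant subspace of $B_K$, and then recover divisibility of polynomials via a monic polynomial-division argument.

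First I would note that since $p$ is square-free, Lemma \ref{lemma:Qproperties}(1) says that $\cQ\colon A\to B$ is injective, and therefore so is the induced $K$-linear map $\cQ_K\colon A_K\to B_K$ between $K$-vector spaces of dimensions $d$ and $k$. The commutativity $\cM\cQ=\cQ\cL_t$ then says exactly that $\cQ_K(A_K)$ is an $\cM_K$-invariant subspace of $B_K$ and, under the identification $A_K\cong \cQ_K(A_K)$ induced by $\cQ_K$, the restriction of $\cM_K$ agrees with $\cL_{t,K}$.

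Next, introduce an auxiliary variable $\lambda$ and work with characteristic polynomials rather than determinants. Both $\det(\lambda I-\cM)$ and $\det(\lambda I-\cL_t)=\phom(\xvar,\lambda)$ lie in $\R[\xvar][\lambda]$ and are monic in $\lambda$, of degrees $k$ and $d$ respectively. Choosing any $K$-linear complement of $\cQ_K(A_K)$ in $B_K$ puts $\cM_K$ in block upper-triangular form with diagonal blocks $\cL_{t,K}$ and some matrix $M'\in{\rm Mat}_{k-d}(K)$, giving a factorization
\[
\det(\lambda I-\cM) \;=\; \phom(\xvar,\lambda)\cdot h(\xvar,\lambda)
\]
with $h\in K[\lambda]$.

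The remaining step is to show that $h$ in fact has coefficients in $\R[\xvar]$. Because $\phom(\xvar,\lambda)$ is monic of degree $d$ in $\lambda$, one can divide $\det(\lambda I-\cM)\in\R[\xvar][\lambda]$ by $\phom$ \emph{inside} $\R[\xvar][\lambda]$, obtaining a quotient and a remainder in $\R[\xvar][\lambda]$ with remainder of $\lambda$-degree less than $d$; by uniqueness of polynomial division in $K[\lambda]$ these must coincide with $h$ and $0$. Hence $h\in\R[\xvar][\lambda]$, and specializing $\lambda=1$ yields $\det(I-\cM)=p\cdot h(\xvar,1)$ in $\R[\xvar]$, which is the claimed divisibility.

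I do not foresee a serious obstacle; the only subtle point is the monic polynomial-division argument that lifts the factorization from $K[\lambda]$ back to $\R[\xvar][\lambda]$. Note that the symmetry, linearity, and homogeneity of $\cM$ play no role in this divisibility statement itself—they are needed later to turn such an $\cM$ into a genuine \emph{definite} determinantal representation.
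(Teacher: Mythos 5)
Your proof is correct, but it takes a genuinely different route from the paper's. The paper argues pointwise over the reals: for generic $a\in\R^n$ the matrix $\cQ(a)$ is injective, so every eigenvalue of $\cL_t(a)$ is an eigenvalue of $\cM(a)$; by Proposition \ref{prop:eigenvalues} this means $\det(I-\cM)$ vanishes on the real zero set of $p$, and the conclusion follows because the ideal $(p)$ is real radical for a square-free real-zero polynomial (Bochnak--Coste--Roy, Thm.\ 4.5.1(v)) --- i.e.\ via the real Nullstellensatz. You instead work over $K=\R(\xvar)$: injectivity of $\cQ$ (Lemma \ref{lemma:Qproperties}(1)) makes $\cQ_K(A_K)$ an $\cM_K$-invariant subspace on which $\cM_K$ restricts to $\cL_{t,K}$, so the characteristic polynomial $\phom(\xvar,\lambda)$ of $\cL_t$ divides $\det(\lambda I-\cM)$ in $K[\lambda]$, and division by the monic polynomial $\phom$ descends the factorization to $\R[\xvar][\lambda]$; setting $\lambda=1$ gives the claim. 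Your argument is purely algebraic, uses square-freeness only through the injectivity of $\cQ$, avoids the real Nullstellensatz entirely, and actually proves the slightly stronger statement that all of $\phom(\xvar,\lambda)$ divides the characteristic polynomial of $\cM$ over $\R[\xvar]$. The paper's proof is shorter given the cited machinery and ties in with the eigenvalue picture of Proposition \ref{prop:eigenvalues}, but yours is self-contained, and each of the steps you flag (injectivity of $\cQ_K$, the block-triangular factorization, monic division over $\R[\xvar]$) is standard and goes through.
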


\begin{remark}
Note that the above described setup exactly means that we can hope for such a linear symmetric $\cM$ to exist. Indeed the "strange" symmetry of $\cL_t$ is transformed into the standard symmetry by $\cQ$, and the "strange" grading is translated to the standard grading.
\end{remark}

\begin{proof}
For generic $a\in \R^n$, the map $\cQ(a)$ is injective by Lemma
\ref{lemma:Qproperties}. Therefore, all eigenvalues of $\cL_t(a)$ are
also eigenvalues of $\cM(a)$. The eigenvalues of $\cL_t(a)$ are precisely
the zeros of $\phom(t,a)$, i.e.~the inverses of the zeros of
$p(ta)$. So $q=\det(I-\cM)$ vanishes on the zero set of $p$, by
Proposition \ref{prop:eigenvalues}. Since $p$ is a square-free real
zero polynomial, the ideal $(p)$ generated by $p$ in $\R[\xvar]$ is real-radical (see
Bochnak, Coste and Roy \cite{bcr}, Theorem 4.5.1(v)). It follows that
$q$ is contained in $(p)$, in other words $p$ divides $q$.
\end{proof}

\begin{remark}
  Whether there exists such $\cM$ can be decided by solving
  a system of linear equations. Indeed, set $\cM=x_1M_1+\cdots
  +x_nM_n,$ where the $M_i$ are symmetric matrices with indeterminate
  entries. The equation $\cM\cQ=\cQ\cL_t$ of matrix polynomials can be
  considered entrywise, and comparison of the coefficients in $\xvar$ gives rise to a system of linear
  equations in the entries of the $M_i$.
\end{remark}

\begin{example}\label{quad2} Let $p\in\R[\xvar]$ be quadratic. Write
  $p=\xvar^T A \xvar +b^T\xvar +1$ with $A\in{\rm Sym}_n(\R)$ and $b\in\R^n$. We
  have seen in Example \ref{quad1} that $\cH(p)$ admits a sums of
  squares decomposition if $p$ is a real-zero polynomial, given by
  the matrix 
\[
\cQ=\sqrt{2}\cdot \left(\footnotesize\begin{array}{cc}1 & -\frac12 b^T\xvar \\0 &
    \frac12 v_1^T\xvar \\\vdots & \vdots \\0 & \frac12 v_n^T\xvar
\end{array}\right)
\]
if $bb^T-4A=\sum_{i=1}^n v_iv_i^T$. It is now easy to find a
homogeneous linear matrix polynomial $\cM$ that makes the diagram in
Theorem~\ref{thm:main} commute, namely we can take
\[
\cM=\frac12\cdot \left(\footnotesize\begin{array}{cccc}-b^T\xvar &  v_1^T\xvar &
    \cdots &  v_n^T\xvar \\v_1^T\xvar & -b^T\xvar & 0 & 0 \\\vdots & 0
    & \ddots & 0 \\v_n^T\xvar & 0 & 0 &  -b^T\xvar\end{array}\right).
\]
The resulting determinantal representation is
\[
\det\left(I-\cM\right)=\biggl(1+\frac12\cdot b^T\xvar\biggr)^{n-1}\cdot p.
\]

To give an explicit example, consider $p=(x_1+\sqrt{2})^2
-x_2^2-x_3^2-x_4^2-x_5^2$, which itself does not admit a determinantal
representation (by Netzer and Thom \cite{neth}).  The procedure just described now gives rise to the
linear matrix polynomial
\[
\cM=\left(\footnotesize\begin{array}{cccccc}-\sqrt{2}x_1 & x_1 & x_2 & x_3 & x_4 &
    x_5 \\x_1 & -\sqrt{2}x_1 & 0 & 0 & 0 & 0 \\x_2 & 0 & -\sqrt{2}x_1
    & 0 & 0 & 0 \\x_3 & 0 & 0 & -\sqrt{2}x_1 & 0 & 0 \\x_4 & 0 & 0 & 0
    & -\sqrt{2}x_1 & 0 \\x_5 & 0 & 0 & 0 & 0 &
    -\sqrt{2}x_1\end{array}\right)
\] 
and
finally 
\[
\det(I-\cM)=(1+\sqrt{2}x_1)^4\cdot p.
\]
\end{example}

\begin{example}
There are also examples where no suitable $\cM$ exists. We are grateful to Rainer Sinn and Cynthia Vinzant for helping us find this example. Consider the
plane cubic $p=(x_1-1)^2(x_1+1)-x_2^2.$ One computes 
\[
\cH(p)=\left(\footnotesize\begin{array}{ccc}3 & x_1 & 3x_1^2+2x_2^2 \\x_1 &
    3x_1^2+2x_2^2 & x_1^3+3x_1x_2^2 \\3x_1^2+2x_2^2 & x_1^3+3x_1x_2^2
    & 3x_1^4+8x_1^2x_2^2+2x_2^4\end{array}\right)=\cQ^T\cQ,
\]
where
\[
\cQ=\left(\footnotesize\begin{array}{ccc}0 & x_2 & ax_1x_2 \\0 & -x_2 &
    bx_1x_2 \\\sqrt{2} & \sqrt{2}x_1 & \sqrt{2}(x_1^2+x_2^2) \\1 &
    -x_1 & x_1^2\end{array}\right)
\]
and $a=\frac12(\sqrt{7}+1)$, $b=\frac12(\sqrt{7}-1)$. The equation
$\cM\cQ=\cQ\cL_t$ has 12 entries, each of which gives rise to several linear
equations by comparing coefficients in $\xvar.$
One can check that already the equations obtained from the first two rows of
$\cM\cQ=\cQ\cL_t$ are unsolvable. 
\end{example}

\section{Rational representations of degree one} 
There is always a way to make the diagram from the last section commute, if one allows for \emph{rational}  linear matrix polynomials. This will lead to rational determinantal representations, as described now.

Let $p$ be a square-free real-zero polynomial. Since the
parametrized Hermite matrix $\cH(p)$ evaluated at a point $a\in\R^n$
is positive definite for generic $a$, the matrix polynomial $\cH(p)$
is invertible over the function field $\R(\xvar)$. Recall that the
degree of a rational function $f/g\in\R(\xvar)$ is defined as
$\deg(f)-\deg(g)$. Furthermore, we say that $f/g$ is homogeneous if
both $f$ and $g$ are homogeneous, not necessarily of the same
degree. Equivalently, $f/g$ is homogeneous of degree $d$ if and only
if $(f/g)(\lambda a)=\lambda^d(f/g)(a)$ holds for all $a\in\R^n$ with
$g(a)\neq 0$.

\begin{theorem}\label{thm:rat} Let $p$ be a square-free real-zero polynomial. Write $q^2\cH(p)=\cQ^T\cQ$ with $q$ homogeneous as in Lemma \ref{lemma:gori} and let 
\[
\cM:=q^{-2}\cQ\cL_t\cH(p)^{-1}\cQ^T.
\]
The matrix $\cM$ is symmetric with entries in $\R(x)$ homogeneous of degree $1$,
and satisfies
\[
\det(I-\cM)=p.
\]
\end{theorem}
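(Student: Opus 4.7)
The plan is to establish three properties of $\cM$ in sequence: that it is symmetric, that each entry is a homogeneous element of $\R(\xvar)$ of degree $1$, and that $\det(I-\cM)=p$. All computations take place over $\R(\xvar)$, where $\cH(p)^{-1}$ is well-defined; this is the one place where the square-free hypothesis enters, since it guarantees that the discriminant $\det \cH(p)$ is a nonzero polynomial (by Thm.~\ref{thm:HermiteMatrix} applied to a generic $a\in\R^n$ for which $p(ta)$ has distinct real roots).

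For symmetry, I would restate Lemma~\ref{lemma:selfad} as the matrix identity $\cL_t^T\cH(p)=\cH(p)\cL_t$; inverting gives $\cH(p)^{-1}\cL_t^T = \cL_t\cH(p)^{-1}$, after which $\cM^T=\cM$ follows in two lines using symmetry of $\cH(p)^{-1}$. The homogeneity claim is a bookkeeping exercise: the $j$-th column of $\cQ$ is homogeneous of degree $r+j-1$, the $(a,b)$-entry of $\cL_t$ has degree $b-a+1$ when nonzero (read off the companion matrix), the $(b,c)$-entry of $\cH(p)^{-1}$ has degree $-(b+c-2)$ (the cofactor has degree $d^2-d-b-c+2$ while $\det\cH(p)$ has degree $d^2-d$), and $q^{-2}$ contributes $-2r$. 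Summing these along the product cancels everything except a $+1$, so every entry of $\cM$ is homogeneous of degree $1$.

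The main content is the identity $\det(I-\cM)=p$. The key trick is to use $q^2\cH(p)=\cQ^T\cQ$ to rewrite
\[
\cM \;=\; \cQ\,\cL_t\,(\cQ^T\cQ)^{-1}\cQ^T,
\]
which displays $\cM$ as a product $UV$ of the $k\times d$ matrix $U=\cQ$ and the $d\times k$ matrix $V=\cL_t(\cQ^T\cQ)^{-1}\cQ^T$. The Sylvester determinant identity $\det(I_k - UV) = \det(I_d - VU)$ then reduces the $k\times k$ computation to a $d\times d$ one. Since $VU = \cL_t(\cQ^T\cQ)^{-1}\cQ^T\cQ = \cL_t$, this yields $\det(I-\cM) = \det(I-\cL_t) = p$, where the last equality is the defining property of the companion matrix of $\phom$. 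I do not expect a serious obstacle beyond spotting this reduction: the whole point of the definition of $\cM$ is that it was engineered so that the $k\times k$ determinantal representation collapses via Sylvester's identity onto the $d\times d$ companion matrix, and the remaining symmetry and degree checks are formal.
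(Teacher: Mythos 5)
Your proposal is correct and follows essentially the same route as the paper's proof: Sylvester's identity $\det(I_k-UV)=\det(I_d-VU)$ collapses $\det(I-\cM)$ onto $\det(I-\cL_t)=p$ via $\cQ^T\cQ=q^2\cH(p)$, and symmetry comes from Lemma~\ref{lemma:selfad} read as $\cL_t^T\cH(p)=\cH(p)\cL_t$. The only (cosmetic) difference is that you verify homogeneity by entry-wise degree bookkeeping through the product, whereas the paper checks the scaling law $\cM(\lambda a)=\lambda\,\cM(a)$ using diagonal scaling matrices for $\cQ$, $\cH$, and $\cL_t$; both computations are equivalent.
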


\begin{proof}
Abbreviate $\cH(p)$ by $\cH$ and $\cL_t$ by $\cL$.
By Sylvesters determinant theorem, we have
$\det(I_k-\mathcal{A}\mathcal{B})=\det(I_d-\mathcal{B}\mathcal{A})$
for any matrix polynomials $\mathcal{A}$ of size $k\times d$ and
$\mathcal{B}$ of size $d\times k$. In our situation, this yields
\[
\det (I_k-\cM)= \det(I_k
-q^{-2}\cQ\cL\cH^{-1}\cQ^T)= \det(I_d - q^{-2}\cL\cH^{-1}\cQ^T\cQ) =
\det(I_d-\cL)=p.
\]
 We find 
\[
\cM^T= q^{-2}\cQ (\cH^{-1})^T\cL^T\cQ^T= q^{-2}\cQ \cL\cH^{-1}\cQ^T=\cM,
\]
where we have used $\cL^T\cH=\cH^T\cL$, which is Lemma \ref{lemma:selfad}. Thus $\cM$ is symmetric.\\
Let $r$ be the degree of $q$. By examining the degree structure of
$q^2\cH$, we find
\begin{align*}
  \cQ(\lambda a)&=\cQ(a)\cdot{\rm diag}(\lambda^r,
  \lambda^{r+1},\ldots,\lambda^{r+d-1})\\
\cH(\lambda a) &= {\rm
    diag}(\lambda^0,\ldots,\lambda^{d-1})\cdot\cH(a)\cdot{\rm
    diag}(\lambda^0,\ldots,\lambda^{d-1})\\
\cL(\lambda a) &={\rm
    diag}(\lambda^d,\ldots,\lambda^1)\cdot\cL(a)\cdot{\rm
    diag}(\lambda^{-d+1},\lambda^{-d+2}\ldots,\lambda^{0})
\end{align*}
for all $a\in \R^n$ and $\lambda\neq 0$. Hence for all $a\in\R^n$ for
which $\cH(a)$ is invertible and $q(a)\neq 0$, and all $\lambda\neq 0$, we have 
\begin{align*} \cM(\lambda a) &=\lambda^{-2r} q(a)^{-2} \cQ(a)\cdot
  {\rm diag}(\lambda^r,\ldots,\lambda^{r+d-1})\cdot {\rm
    diag}(\lambda^d,\ldots,\lambda)\cL(a)\cdot{\rm
    diag}(\lambda^{-d+1},\ldots,\lambda^0)\\
  &\quad \cdot{\rm diag}(\lambda^{0},\ldots,\lambda^{-d+1})\cdot
  \cH^{-1}(a)\cdot{\rm diag}(\lambda^0,\ldots,\lambda^{-d+1})\cdot{\rm
    diag}(\lambda^r,\ldots,\lambda^{r+d-1})\cdot\cQ^T(a)\\
  &=\lambda^{-2r}q(a)^{-2}\cQ(a)\lambda^{r+d}\cL(a)\lambda^{-d+1}\cH^{-1}(a)\lambda^r\cQ^T(a)\\
  &= \lambda\cdot\cM(a).
\end{align*}
\end{proof}

\begin{remark}
Note that a representation $p=\det\left(I-\cM\right)$ as in Theorem \ref{thm:rat} gives an algebraic certificate for $p$ being a real-zero polynomial. Since $p(ta)=\det\left( I-t\cM(a)\right),$ using homogeneity, the zeros of $p(ta)$ are just the inverses of the eigenvalues of $\cM(a)$. Since $\cM$ is symmetric, all of these zeros are real. Theorem \ref{thm:rat} now states that such an algebraic certificate exists for \emph{each} real-zero polynomial $p$.
\end{remark}


\begin{example}
Consider the quadratic polynomial $p=(x_1+1)^2 -x_2^2-x_3^2 -x_4^2.$
We have 
\[
\cH=\left(\!\!\!\footnotesize\begin{array}{cc}2 & -2x_1 \\-2x_1 & 2(x_1^2+x_2^2+x_3^2
    +x_4^2)\end{array}\!\!\!\right)=\cQ^T\cQ
\quad\text{with}\quad
\cQ^T  = \left(\!\!\!\footnotesize\begin{array}{cccc}\sqrt{2} & 0 & 0 & 0 \\
  -\sqrt{2}x_1 & \sqrt{2}x_2 & \sqrt{2}x_3 & \sqrt{2}x_4
\end{array}\!\!\!\right),
\]
which results in 
\[
\cM= \left(\!\!\footnotesize\begin{array}{cccc}-x_1 & x_2 & x_3 & x_4
    \\x_2 & -\frac{x_1x_2^2}{x_2^2+x_3^2+x_4^2} &
    -\frac{x_1x_2x_3}{x_2^2+x_3^2+x_4^2} &
    -\frac{x_1x_2x_4}{x_2^2+x_3^2+x_4^2} \\x_3 &
    -\frac{x_1x_2x_3}{x_2^2+x_3^2+x_4^2} &
    -\frac{x_1x_3^2}{x_2^2+x_3^2+x_4^2} &
    -\frac{x_1x_3x_4}{x_2^2+x_3^2+x_4^2} \\x_4 &
    -\frac{x_1x_2x_4}{x_2^2+x_3^2+x_4^2} &
    -\frac{x_1x_3x_4}{x_2^2+x_3^2+x_4^2} &
    -\frac{x_1x_4^2}{x_2^2+x_3^2+x_4^2}\end{array}\!\!\right).
\]
\end{example}

\begin{bibdiv}
\begin{biblist}

\bib{bpr}{book}{
    AUTHOR = {S. Basu  and R.  Pollack and M.-F. Roy},
     TITLE = {Algorithms in real algebraic geometry},
    SERIES = {Algorithms and Computation in Mathematics},
    VOLUME = {10},
 PUBLISHER = {Springer-Verlag},
   ADDRESS = {Berlin},
      YEAR = {2003},
 }

\bib{bcr}{book}{
    AUTHOR = {J. Bochnak and M. Coste and M.-F. Roy},
     TITLE = {Real algebraic geometry},
    SERIES = {Ergebnisse der Mathematik und ihrer Grenzgebiete},
    VOLUME = {36},
  PUBLISHER = {Springer},
   ADDRESS = {Berlin},
      YEAR = {1998},
     PAGES = {x+430},
}

\bib{borch}{article}{
	AUTHOR = {Borchardt, C.W.},
	TITLE = {Neue Eigenschaft der Gleichung, mit deren H\"ulfe man die secul\"aren
    St\"orungen der Planeten bestimmt.},
	JOURNAL = {J. Reine Angew. Math.},
        PAGES = {38-45}
	YEAR = {1846},
}

\bib{bran}{article}{
	AUTHOR = {P. Br\"and\'{e}n},
	TITLE = {Obstructions to determinantal representability},
	JOURNAL = {Preprint},
	YEAR = {2010},
}

\bib{dixon}{article}{
author= {Dixon, A.C.},
title={Note on the reduction of a ternary quantic to a symmetrical
    determinant.},
journal={Cambr. Proc. (5)},
volume={11},
pages={350-351},
year={1902},
}


%

%

%

\bib{gori}{article}{
    AUTHOR = {D. Gondard and P. Ribenboim},
     TITLE = {Le 17e probl\`eme de {H}ilbert pour les matrices},
   JOURNAL = {Bull. Sci. Math. (2)},
  FJOURNAL = {Bulletin des Sciences Math\'ematiques. 2e S\'erie},
    VOLUME = {98},
      YEAR = {1974},
    NUMBER = {1},
     PAGES = {49--56},
}


%

%


\bib{hevi}{article}{
   AUTHOR = {J.W. Helton and V. Vinnikov},
    TITLE = {Linear matrix inequality representation of sets},
  JOURNAL = {Comm. Pure Appl. Math.},
 FJOURNAL = {Communications on Pure and Applied Mathematics},
   VOLUME = {60},
     YEAR = {2007},
   NUMBER = {5},
    PAGES = {654--674},
}

\bib{hmv}{article}{
    AUTHOR = {J.W. Helton and S. McCullough and V. Vinnikov},
     TITLE = {Noncommutative convexity arises from linear matrix
              inequalities},
   JOURNAL = {J. Funct. Anal.},
  FJOURNAL = {Journal of Functional Analysis},
    VOLUME = {240},
      YEAR = {2006},
    NUMBER = {1},
     PAGES = {105--191},
}

\bib{hen}{article}{
    AUTHOR = {D. Henrion},
     TITLE = {Detecting rigid convexity of bivariate polynomials},
   JOURNAL = {Linear Algebra Appl.},
  FJOURNAL = {Linear Algebra and its Applications},
    VOLUME = {432},
      YEAR = {2010},
    NUMBER = {5},
     PAGES = {1218--1233},
    
}

%

\bib{jak}{article}{
    AUTHOR = {V.A. Jakubovi{\v{c}}},
     TITLE = {Factorization of symmetric matrix polynomials},
   JOURNAL = {Dokl. Akad. Nauk SSSR},
  FJOURNAL = {Doklady Akademii Nauk SSSR},
    VOLUME = {194},
      YEAR = {1970},
     PAGES = {532--535},
}

%

%


%


%

\bib{neth}{article}{
    AUTHOR = {T. Netzer and A. Thom},
     TITLE = {Polynomials with and without determinantal representations},
   JOURNAL = {Preprint},
      YEAR = {2010},
}

\bib{nu}{article}{
    AUTHOR = {W. Nuij},
     TITLE = {A note on hyperbolic polynomials},
   JOURNAL = {Math. Scand.},
  FJOURNAL = {Mathematica Scandinavica},
    VOLUME = {23},
      YEAR = {1968},
     PAGES = {69--72 (1969)},
 }

%

%


%

\bib{qu}{article}{
	AUTHOR = {R. Quarez},
	TITLE = {Symmetric Determinantal Representation of Polynomials},
	JOURNAL ={Preprint},
}

%

%

%


%

\bib{vinn}{article}{
   AUTHOR = {Vinnikov, V.},
     TITLE = {Complete description of determinantal representations of
              smooth irreducible curves},
   JOURNAL = {Linear Algebra Appl.},
  FJOURNAL = {Linear Algebra and its Applications},
    VOLUME = {125},
      YEAR = {1989},
     PAGES = {103--140},
      ISSN = {0024-3795},
     CODEN = {LAAPAW},
}

\end{biblist}
\end{bibdiv}
 
\end{document}